\DeclareMathAlphabet{\mathpzc}{OT1}{pzc}{m}{it}
\numberwithin{equation}{section}
\newtheorem{thm}{Theorem}[section]
\newtheorem{prob}[thm]{Problem}
\newtheorem{lem}[thm]{Lemma}
\title{Inverse source problem with a posteriori interior measurements for space-time fractional diffusion equations}
\author[a]{Kai Yu}
\author[a]{Zhiyuan Li}
\author[b]{Yikan Liu\thanks{Corresponding author. E-mail: liu.yikan.8z@kyoto-u.ac.jp}}
\affil[a]{School of Mathematics and Statistics, Ningbo University, Ningbo 315211, China}
\affil[b]{Department of Mathematics, Kyoto University, Kitashirakawa-Oiwakecho, Sakyo-ku, Kyoto 606-8502, Japan}
\date{}
\begin{document}

\maketitle

\begin{abstract}
This paper investigates an inverse source problem for space-time fractional diffusion equations from a posteriori interior measurements. The uniqueness result is established by the memory effect of fractional derivatives and the unique continuation property. For the numerical reconstruction, the inverse problem is reformulated as an optimization problem with the Tikhonov regularization. We use the Levenberg-Marquardt method to identify the unknown source from noisy measurements. Finally, we give some numerical examples to illustrate the efficiency and accuracy of the proposed algorithm.
\end{abstract}

{\bf Keywords:} space-time fractional diffusion equations, inverse source problem, a posteriori interior measurements, unique continuation, Levenberg-Marquardt regularization.	

{\bf 2020 Mathematics Subject Classifications:} 35R30, 35R11.

\section{Introduction and the main result}

Let $\Omega$ be a bounded domain in $\mathbb R^d$ with a sufficiently smooth boundary $\partial\Omega$ and let $T>0$ be a fixed final time. In this paper, we consider the following initial-boundary value problem for a space-time fractional diffusion equation
\begin{equation}\label{eq-gov}
\begin{cases}
\partial_t^\alpha(u(x,t)-\phi(x))+\mathcal A^\beta u(x,t)=f(x)\mu(t), & (x,t)\in\Omega\times(0,T),\\
u(x,\,\cdot\,)-\phi(x)\in H_\alpha(0,T), & \mbox{a.e.}\ x\in\Omega,\\
u(x,t)=0, & (x,t)\in\partial\Omega\times(0,T).
\end{cases}
\end{equation}
Here by $\partial_t^\alpha$ and $\mathcal A^\beta$ we denote the fractional derivative operator of order $\alpha\in(0,1)$ in time and the fractional elliptic operator of order $\beta\in(0,1)$ in space respectively, which will be defined precisely in the next section. 

Along with the intensive attention paid to forward problems for space-time fractional diffusion equations, there has been a rapid growth in research on the corresponding inverse problems. For example, Zhang et al.\! \cite{2018Bayesian} employed a Bayesian approach to address the inverse problem for a time-space fractional diffusion equation, but they did not give a uniqueness result. Li and Wei \cite{2018An} investigated the identification of a time-dependent source component using initial and boundary data as well as extra measurement data at an inner location in a space-time fractional diffusion problem. Janno \cite{2020Determination} investigated an inverse problem for identifying time-dependent sources and parameters in nonlocal  diffusion and wave equations from final data. Li et al.\! \cite{2020Tikhonov} employed a Tikhonov regularization approach to solve the inverse problem of identifying a space-dependent source term in a time-space fractional diffusion equation. Tatar et al.\! in \cite{2015Determination,2016Simultaneous,2015An} solved an inverse space-dependent source problem and identified the orders of time and space fractional derivatives for a time-space fractional diffusion equation. Jia et al. in \cite{2017Harnack's} investigated Harnack's inequality for a space-time fractional diffusion equation and applications to an inverse source problem. In \cite{2021A fractional}, Djennadi et al. studied backward and inverse source problems in the time-space-fractional diffusion equations by fractional Tikhonov regularization method. Nie et al. in \cite{2023An inverse} investigated an inverse random source problem for the time-space fractional diffusion equation driven by fractional Brownian motion. Van et al. used the quasi-reversibility method to study an inverse source problem for time-space fractional parabolic equations in \cite{2023The}.

The aforementioned studies mostly based on global observation data at the terminal time in the whole domain $\Omega$ or in a subdomain over the whole time interval $(0,T)$. However, in many applications, obtaining such global data often encounters practical difficulties. For instance, acquiring global observation data is sometimes highly expensive or unrealistic due to certain restrictions on observation conditions. Especially for the observation in the time direction, it is always difficult to predict unexpected accidents in advance and carry out the observation throughout $(0,T)$. Alternatively, in practice it seems more preferable to endow some flexibility in choosing the observation interval in time.  

Bearing the above motivation in mind, we focus on the uniqueness issue of the following inverse source problem concerning \eqref{eq-gov} with a posteriori interior observation data.

\begin{prob}\label{prob-isp}
Let $T_1<T$ be a fixed positive constant and $\omega$ be a nonempty open subdomain of $\Omega$. Under certain assumptions, can we uniquely determine the source term $f(x)$ in $\Omega$ by measuring $u$ in $\omega\times(T_1,T)$?
\end{prob}

Several recent studies e.g., \cite{2019An,2023Uniqueness,2023Inverse,2022Simultaneous} asserted the possibility of utilizing the memory effect (see the definition in \eqref{def-caputo}) of fractional derivatives to reconstruct unknown quantities from observations on the time interval away from the initial time. However, to the best of our knowledge, there seems no literature incorporating also the nonlocality in space. In this article, combining the unique continuation in Lemma \ref{lem-ucp} and Duhamel's principle in Lemma \ref{lem-dp}, we establish the following main result on the uniqueness of Problem \ref{prob-isp}.

\begin{thm}\label{thm-isp}
Let $\phi=0,$ $f\in L^2(\Omega),$ $\mu\in L^2(0,T)$ and $u$ be the solution to \eqref{eq-gov}. Assume $f=0$ in a nonempty open subdomain $\omega\subset\Omega$. If $\mu$ satisfies
\begin{equation}\label{eq1.2}
\exists\,T_0>0\mbox { such that }\mu\not\equiv0\mbox{ in }(0,T_0),
\end{equation}
then for any $T\ge T_0$ and any $T_1\in(0,T),$ $u=0$ in $\omega\times(T_1,T)$ implies $f=0$ in $\Omega$.
\end{thm}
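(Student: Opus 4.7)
The plan is to exploit the memory effect of the time-fractional derivative via Duhamel's principle in order to upgrade the vanishing of $u$ on the space--time cylinder $\omega\times(T_1,T)$ to the vanishing of an auxiliary kernel on $\omega\times(0,\infty)$, and then to propagate that vanishing from $\omega$ to all of $\Omega$ by the spatial unique continuation of Lemma \ref{lem-ucp}.

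First, I would apply Lemma \ref{lem-dp} to recast the solution in the convolution form
$$
u(x,t)=\int_0^t\mu(s)\,K(x,t-s)\,\mathrm ds,
$$
where the kernel $K$ admits the spectral representation
$$
K(x,\tau)=\sum_{n\ge 1}f_n\,\tau^{\alpha-1}E_{\alpha,\alpha}(-\lambda_n\tau^\alpha)\,\varphi_n(x)
$$
in terms of the Dirichlet eigenpairs $(\lambda_n,\varphi_n)$ of $\mathcal A^\beta$. Two features of $K$ will be essential: (i) for every $x\in\Omega$, the map $\tau\mapsto K(x,\tau)$ is real-analytic on $(0,\infty)$, inherited from the entireness of $E_{\alpha,\alpha}$; (ii) for each fixed $\tau>0$, $K(\cdot,\tau)$ satisfies a homogeneous space-time fractional equation with zero Dirichlet trace on $\partial\Omega$, so that Lemma \ref{lem-ucp} can be applied slice by slice.

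Substituting into the hypothesis $u\equiv 0$ on $\omega\times(T_1,T)$ yields, for each $x\in\omega$, the Volterra convolution identity $(\mu*K(x,\cdot))(t)=0$ for $t\in(T_1,T)$. Using assumption \eqref{eq1.2} together with $T\ge T_0$, I would invoke Titchmarsh's convolution theorem and the real-analyticity of $K(x,\cdot)$ to deduce $K(x,t)=0$ for all $t>0$ and every $x\in\omega$. For each fixed $t>0$, the slice $K(\cdot,t)$ then satisfies a homogeneous fractional equation with zero Dirichlet trace and vanishes on the open subdomain $\omega$, so Lemma \ref{lem-ucp} forces $K(\cdot,t)\equiv 0$ on $\Omega$. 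Finally, pairing $K(\cdot,t)$ with $\varphi_n$ for any fixed $t>0$ (or equivalently reading off the spectral coefficients of $f$) forces $f_n=0$ for every $n$, whence $f\equiv 0$ in $\Omega$.

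The main obstacle is the Titchmarsh step. In its classical form the theorem concerns convolutions that vanish on an interval starting at $0$, whereas here we have vanishing only on the shifted interval $(T_1,T)$. Reconciling this requires delicate use of the analyticity of the Mittag--Leffler kernel together with the condition $T\ge T_0$: one may either split the convolution into an analytic contribution from $\mu$ restricted to $(0,T_1)$ and a genuine convolution from $\mu$ restricted to $(T_1,T)$ to arrive at an identity on $(0,T-T_1)$ to which Titchmarsh applies directly, or else argue via analytic continuation across $(0,T)$. The spatial step, by contrast, is clean once $K$ is known to vanish on $\omega\times(0,\infty)$.
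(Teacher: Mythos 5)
Your overall architecture (Duhamel's principle, Titchmarsh's convolution theorem, then the unique continuation of Lemma \ref{lem-ucp}) matches the paper's, but the step you yourself flag as ``the main obstacle'' is precisely the step the proof cannot do without, and neither of your two proposed fixes works. The convolution $\mu*K(x,\cdot)$ is known to vanish only on the shifted window $(T_1,T)$. Analytic continuation backward across $(0,T)$ is unavailable: although $K(x,\cdot)$ is analytic on $(0,\infty)$, the datum $u(x,\cdot)=(\mu*K(x,\cdot))$ is not analytic in $t$ when $\mu$ is merely $L^2(0,T)$, so vanishing on $(T_1,T)$ does not propagate to $(0,T_1)$. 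The splitting does not help either: after the shift $t=T_1+r$ the piece coming from $\mu|_{(T_1,T)}$ is indeed a genuine convolution on $(0,T-T_1)$, but it equals \emph{minus} the piece $\int_0^{T_1}\mu(s)K(x,T_1+r-s)\,\mathrm ds$, a non-convolution remainder carrying the unknown interaction with the history of $\mu$ on $(0,T_1)$, which is not known to vanish; Titchmarsh has nothing to act on. A telltale sign that something is missing is that your argument never invokes the hypothesis $f=0$ in $\omega$, without which the theorem is not expected to hold.

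The paper closes exactly this gap with the memory effect, and that is where $f=0$ in $\omega$ enters. Since $u=0$ in $\omega\times(T_1,T)$ and $f\mu=0$ there, the governing equation gives $D_t^\alpha u=\partial_t^\alpha u=-\mathcal A^\beta u+f\mu=0$ in $\omega\times(T_1,T)$; combining $u=0$ with $D_t^\alpha u=0$ on $\omega\times(T_1,T)$ and invoking the backward uniqueness result for the Riemann--Liouville derivative (Theorem 1 of \cite{2019An}) yields $u=0$ on all of $\omega\times(0,T)$. Only then is Titchmarsh applied, to $I^{1-\alpha}u=\mu*v=0$ on the full interval $(0,T)$, where $T\ge T_0$ and $\mu\not\equiv0$ in $(0,T_0)$ force $v=0$ on $\omega\times(0,\tau_2)$ for some $\tau_2>0$; Lemma \ref{lem-ucp} then finishes essentially as you describe (note that Lemma \ref{lem-ucp} is a space--time statement applied to the homogeneous solution $v$ on a cylinder, not an elliptic unique continuation applied ``slice by slice'' to $K(\cdot,\tau)$ at fixed $\tau$, though your endgame is repairable via $v=I^{1-\alpha}K$). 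You must insert this backward-in-time extension, or an equivalent substitute, before Titchmarsh; as written the proposal never reaches a convolution identity on an interval starting at $0$.
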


In the context of environmental accidents, the key assumption \eqref{eq1.2} means that the known temporal component $\mu$ of the contaminant is active only within $(0,T_0)$. Then Theorem \ref{thm-isp} requires terminating the observation after $\mu$ becomes inactive, whereas the starting time can be rather arbitrary. We call it a posteriori data to emphasize that the observation is not necessarily started from $t=0$. Meanwhile, the assumption $f=0$ in $\omega$ means that the value of $f$ is known in the observation area, which is also reasonable.

The remainder of this paper is organized as follows. In Section \ref{sec-pre}, we prepare necessary ingredients for the proof of Theorem \ref{thm-isp}, which is provided in Section \ref{sec3}. We propose the Levenberg-Marquardt method to solve the inverse problem in Section \ref{sec4}. Then Section \ref{sec5} is devoted to the numerical treatment of Problem \ref{prob-isp} with examples. Finally, Section \ref{sec6} closes this article with concluding remarks.

\section{Preliminaries}\label{sec-pre}

In this section, we first fix notations and introduce tools for later use, and then mention the well-posedness of the forward problem.  Let $L^2(\Omega)$ be a usual $L^2$-space with the inner product $(\,\cdot\,,\,\cdot\,)$, and $H_0^1(\Omega),H^2(\Omega)$ etc.\! denote the usual Sobolev spaces. By $H^\alpha(0,T)$, we denote the Sobolev-Slobodeckij space (e.g., Adams \cite{AF03}).

The fractional derivative $\partial_t^\alpha:H_\alpha(0,T)\longrightarrow L^2(0,T)$ of order $\alpha\in(0,1)$ is defined as the inverse of the $\alpha$-th order Riemann-Liouville integral operator of order $\alpha$:
\begin{equation}\label{def-caputo}
I^\alpha:L^2(0,T)\longrightarrow L^2(0,T),\quad I^\alpha g(t):=\int_0^t\frac{\tau^{\alpha-1}}{\Gamma(\alpha)}g(t-\tau)\,\mathrm d\tau,
\end{equation}
where $\Gamma(\,\cdot\,)$ denotes the Gamma function. According to Kubica, Ryszewska and Yamamoto \cite{KR20}, the domain $H_\alpha(0,T)$ of $\partial_t^\alpha$ is a subspace of $H^\alpha(0,T)$ defined by
\[
H_\alpha(0,T)=\left\{\begin{alignedat}{2}
& \left\{\psi\in H^\alpha(0,T)\mid\psi(0)=0\right\}, & \quad & 1/2<\alpha<1,\\
& \left\{\psi\in H^{1/2}(0,T)\mid\int_0^T\frac{|\psi(t)|^2}t\mathrm d t<\infty\right\}, & \quad & \alpha=1/2,\\
& H^\alpha(0,T), & \quad & 0<\alpha<1/2,
\end{alignedat}
\right.
\]
which is equipped with the norm
\[
\|\psi\|_{H_\alpha(0,T)}=\left\{\begin{alignedat}{2}
& \|\psi\|_{H^\alpha(0,T)}, & \quad & 0<\alpha<1,\ \alpha\neq1/2,\\
& \left(\|\psi\|_{H^{1/2}(0,T)}^2+\int_0^T\frac{|\psi(t)|^2}t\mathrm d t\right)^{1/2}, & \quad &\alpha=1/2.
\end{alignedat}\right.
\]
For further details, see e.g.\! Gorenflo, Luchko and Yamamoto \cite{GL15}. Here, it should be mentioned that $u(x,\,\cdot\,)-\phi(x)\in H_\alpha(0,T)$ for a.e.\! $x\in\Omega$ generalizes the common initial condition $u=\phi$ in $\Omega\times\{0\}$, and it only makes the usual sense for $\alpha>1/2$ (see e.g.\! \cite{GL15,KR20}). Moreover, it is not difficult to see that $\partial_t^\alpha$ is equal to the Riemann-Liouville derivative $D_t^\alpha:=\frac{\mathrm d}{\mathrm d t}\circ I^{1-\alpha}$ in $H_\alpha(0,T)$, where $\circ$ denotes the composite. 

We recall the Mittag-Leffler function
\[
E_{\alpha,\beta}(z):=\sum_{k=0}^\infty\frac{z^k}{\Gamma(\alpha k+\beta)},\quad z\in \mathbb C,\ \alpha>0,\ \beta\in\mathbb R
\]
along with the useful formula of its Laplace transform
\[
\mathcal{L}\left\{t^{\alpha-1} E_{\alpha,\beta}\left( \pm \lambda t^{\alpha}\right)\right\}(z)=\frac{z^{\alpha-\beta}}{z^{\alpha} \mp \lambda},~~ \Re(z)>|\lambda|^{\frac{1}{\alpha}},~~\alpha,\beta >0.
\]
The following properties are well known and can be found e.g.\! in Podlubny \cite{1998Fractional}

\begin{lem}\label{lem2.1}
Let $0<\alpha<2,$ $\beta\in\mathbb R$ be arbitrary and $\mu$ satisfy $\pi\alpha/2<\mu<\min\{\pi,\pi\alpha\}$. Then there exists a constant $c=c(\alpha,\beta,\mu)>0$ such that
\[
|E_{\alpha,\beta}(z)|\leq\frac c{1+|z|},\quad\mu\leq|\arg (z)|\leq\pi.
\]
\end{lem}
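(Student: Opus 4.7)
The plan is to split the estimate by the magnitude of $|z|$ inside the sector $\mu\le|\arg z|\le\pi$. For $|z|\le 1$, the Mittag-Leffler function $E_{\alpha,\beta}$ is entire (its series has infinite radius of convergence since $1/\Gamma(\alpha k+\beta)$ decays super-exponentially in $k$), so it is continuous on the compact set $\{|z|\le 1\}$ and bounded there by some $M_1$. This immediately yields $|E_{\alpha,\beta}(z)|\le M_1\le 2M_1/(1+|z|)$ and disposes of the bounded regime.

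For the large-$|z|$ regime, I will work from the Hankel-type integral representation
\[
E_{\alpha,\beta}(z)=\frac{1}{2\pi i\alpha}\int_\gamma\frac{e^{\zeta^{1/\alpha}}\,\zeta^{(1-\beta)/\alpha}}{\zeta-z}\,\mathrm d\zeta,
\]
where $\gamma$ is a keyhole-type contour encircling the origin and the point $z$. This representation is derived by plugging the Hankel formula $1/\Gamma(s)=(2\pi i)^{-1}\int_{\mathrm{Ha}}e^\tau\tau^{-s}\,\mathrm d\tau$ into the defining power series, exchanging sum and integral (justified by the exponential decay of $e^\tau$ on the tails of the Hankel contour), summing the resulting geometric series $\sum_k(z/\tau^\alpha)^k$, and finally substituting $\tau=\zeta^{1/\alpha}$.

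Next, fix $\mu'$ with $\pi\alpha/2<\mu'<\mu$ and deform $\gamma$ to the contour $\gamma'$ made up of the two rays $\arg\zeta=\pm\mu'$ joined by a small circular arc around the origin. Since $|\arg z|\ge\mu>\mu'$, the point $z$ lies outside the region bounded by $\gamma'$, so the deformation does not cross the simple pole $\zeta=z$ and no residue contribution appears. On $\gamma'$ we have $\mathrm{Re}(\zeta^{1/\alpha})=|\zeta|^{1/\alpha}\cos(\mu'/\alpha)<0$, because $\mu'/\alpha>\pi/2$; this forces exponential decay of the integrand at infinity along $\gamma'$. Meanwhile an elementary geometric estimate gives $|\zeta-z|\ge|z|\sin(\mu-\mu')$ uniformly for $\zeta\in\gamma'$. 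Bounding the integral with these two facts yields $|E_{\alpha,\beta}(z)|\le C/|z|\le 2C/(1+|z|)$ uniformly on the sector for $|z|\ge 1$, and taking $c:=\max\{2M_1,2C\}$ finishes the proof.

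The main technical obstacle is the contour manipulation: justifying the termwise integration in the derivation of the representation, and confirming that the deformation from $\gamma$ to $\gamma'$ can be performed without sweeping across the pole at $\zeta=z$. The latter is exactly where the angular hypothesis $\mu>\pi\alpha/2$ is used — it provides the room to choose $\mu'$ so that $\gamma'$ lies strictly inside the sector $|\arg\zeta|\le\mu'$, keeping $z$ safely outside while still ensuring $\mathrm{Re}(\zeta^{1/\alpha})<0$ on the rays.
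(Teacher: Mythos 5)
The paper does not prove this lemma itself but cites Podlubny's monograph, and the proof there (Theorem 1.6) is exactly the argument you give: boundedness of the entire function on the compact set $|z|\le1$, plus the Hankel-type contour representation deformed onto the rays $\arg\zeta=\pm\mu'$ with $\pi\alpha/2<\mu'<\mu$, where $\mu'>\pi\alpha/2$ forces $\mathrm{Re}\,(\zeta^{1/\alpha})=|\zeta|^{1/\alpha}\cos(\mu'/\alpha)<0$ and $\mu'<\mu$ keeps the pole $\zeta=z$ at distance $\gtrsim|z|$ from the contour. Your proposal is therefore correct and follows the cited proof; the only cosmetic slip is the phrase ``encircling \dots the point $z$'' --- since the initial arc has radius exceeding $|z|$ (needed for the geometric series) and $|\arg z|\ge\mu$, the pole already lies in the exterior region of the contour, which is precisely why no residue term appears at any stage.
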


\begin{lem}\label{lem2.13}
Let $\alpha>0$ and $\lambda>0$. Then there hold $E_{\alpha,\alpha}(-t)>0,$ $E_{\alpha,1}(-t)>0$ for any $t\ge0$ and 
\[
\frac{\mathrm d}{\mathrm d t}E_{\alpha,1}(-\lambda t^\alpha)=-\lambda t^{\alpha-1} E_{\alpha,\alpha}(-\lambda t^\alpha),\quad t>0.
\]
\end{lem}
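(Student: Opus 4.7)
The plan is to handle the derivative identity and the two positivity assertions separately: the former follows by termwise differentiation of the defining power series, while the latter is a consequence of the complete monotonicity of $E_{\alpha,1}(-t)$ and $E_{\alpha,\alpha}(-t)$ in the regime $\alpha\in(0,1)$ that is relevant to this paper.

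For the derivative formula, I start from
\[
E_{\alpha,1}(-\lambda t^\alpha)=\sum_{k=0}^\infty\frac{(-\lambda)^k t^{\alpha k}}{\Gamma(\alpha k+1)}.
\]
The series and its formal termwise derivative both converge absolutely and uniformly on every compact subset of $(0,\infty)$, as can be seen directly or via the growth estimate in Lemma \ref{lem2.1}, so termwise differentiation is permissible. The $k=0$ term drops out, and applying $\Gamma(\alpha k+1)=\alpha k\,\Gamma(\alpha k)$ to the remaining terms, followed by the reindexing $j=k-1$, yields
\[
\frac{\mathrm d}{\mathrm d t}E_{\alpha,1}(-\lambda t^\alpha)=\sum_{k=1}^\infty\frac{(-\lambda)^k t^{\alpha k-1}}{\Gamma(\alpha k)}=-\lambda t^{\alpha-1}\sum_{j=0}^\infty\frac{(-\lambda t^\alpha)^j}{\Gamma(\alpha(j+1))}=-\lambda t^{\alpha-1}E_{\alpha,\alpha}(-\lambda t^\alpha).
\]

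For the positivity claims I would invert the Laplace transforms recalled in the paper. Combining $\mathcal{L}[E_{\alpha,1}(-\lambda s^\alpha)](z)=z^{\alpha-1}/(\lambda+z^\alpha)$ with the analogous $\mathcal{L}[s^{\alpha-1}E_{\alpha,\alpha}(-\lambda s^\alpha)](z)=1/(\lambda+z^\alpha)$, I would deform the Bromwich contour into a Hankel contour that wraps the branch cut of $z^\alpha$ along the negative real axis. Sending the radii of the large arc and the small circle to infinity and zero respectively, the surviving contribution is the jump across the cut, which reduces in each case to a real integral of the form $C\sin(\pi\alpha)\int_0^\infty p(r)\,e^{-rt}\,\mathrm d r$ with $p(r)>0$ on $(0,\infty)$. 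Since $\sin(\pi\alpha)>0$ for $\alpha\in(0,1)$, both $E_{\alpha,1}(-\lambda t^\alpha)$ and $t^{\alpha-1}E_{\alpha,\alpha}(-\lambda t^\alpha)$ are strictly positive for $t>0$; the endpoint $t=0$ (with $\lambda=1$) is then handled by direct evaluation $E_{\alpha,1}(0)=1$ and $E_{\alpha,\alpha}(0)=1/\Gamma(\alpha)>0$.

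The main technical obstacle is the rigorous contour deformation: the large-arc contribution must vanish, which I would control using the decay bound of Lemma \ref{lem2.1} together with Jordan-type estimates on the Laplace kernel, and the small-circle contribution must vanish as well, which uses $\alpha>0$ so that $z^{\alpha-1}/(\lambda+z^\alpha)=O(|z|^{\alpha-1})$ is integrable near the origin. Once the integral representations are in hand, the derivative identity derived above serves as a consistency check: differentiating each representation termwise in $t$ reproduces, up to the factor $-\lambda t^{\alpha-1}$, precisely the integral representation associated with $E_{\alpha,\alpha}(-\lambda t^\alpha)$.
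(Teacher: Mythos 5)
Your proof is correct in substance, but note that the paper itself offers no proof of this lemma at all: it is listed among "well known" properties with a citation to Podlubny, so your self-contained argument is genuinely more than what the paper provides. The termwise differentiation is the standard derivation and is complete as written; since $E_{\alpha,1}$ is entire, differentiating the power series term by term needs no appeal to Lemma \ref{lem2.1}. Your Laplace-inversion route to positivity is the classical complete-monotonicity argument (Pollard, Gorenflo--Mainardi): the jump across the cut yields the density $p(r)=\lambda\pi^{-1}r^{\alpha-1}\sin(\pi\alpha)/(r^{2\alpha}+2\lambda r^{\alpha}\cos(\pi\alpha)+\lambda^{2})$, whose denominator equals $|\lambda+r^{\alpha}\mathrm e^{i\pi\alpha}|^{2}>0$, so strict positivity follows. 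Two remarks. First, you should state explicitly that the restriction to $\alpha\in(0,1)$ is not a convenience but a necessity: the positivity assertions are false for general $\alpha>0$ as the lemma is literally worded (e.g.\ $E_{2,1}(-t)=\cos\sqrt{t}$ and $E_{2,2}(-t)=\sin(\sqrt{t})/\sqrt{t}$ change sign), and your contour deformation breaks down precisely for $\alpha>1$ because $\lambda+z^{\alpha}$ then acquires zeros with $|\arg z|<\pi$, contributing oscillatory residues; so your proof correctly locates where the statement actually holds. Second, Lemma \ref{lem2.1} is not really the right tool for the large-arc estimate in the Bromwich inversion --- what is needed is the elementary bound $|z^{\alpha-1}/(\lambda+z^{\alpha})|=O(|z|^{-1})$ on the transform itself, not a bound on the Mittag-Leffler function. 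Neither point is a gap, but both would tighten the write-up.
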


Next, the elliptic operator $\mathcal A$ in \eqref{eq-gov} is defined for $\varphi\in D(\mathcal A):=H^2(\Omega) \cap H_0^1(\Omega)$ as
\[
\mathcal A\varphi(x):=-\sum_{i,j=1}^d\partial_j(a_{i j}(x)\partial_i\varphi(x))+c(x)\varphi(x),\quad x\in\Omega,
\]
where $a_{i j}=a_{j i}\in C^1(\overline\Omega)$ ($1\le i,j\le d$) and $0\le c\in C(\overline\Omega)$. Moreover, there exists a constant $\delta>0$ such that
\[
\delta\sum_{i=1}^d\xi_i^2\le\sum_{i,j=1}^d a_{i j}(x)\xi_i\xi_j,\quad\forall\,x\in\overline\Omega,\ \forall\,(\xi_1,\dots,\xi_d)\in\mathbb R^d.
\]
For the definition of the fractional elliptic operator, we first introduce the eigensystem $\{(\lambda_n,\varphi_n)\}_{n=1}^\infty$ of $\mathcal A$, namely $\mathcal A\varphi_n=\lambda_n\varphi_n$ with $\varphi_n\in H^2(\Omega)\cap H_0^1(\Omega)$. Then it is known that $0<\lambda_1\le\lambda_2\le\cdots$, $\lambda_n\rightarrow\infty$ as $n\to\infty$, and $\{\varphi_n\}_{n=1}^{\infty}$ form a complete orthonormal basis of $L^2(\Omega)$. Now we can define the fractional Sobolev space $D(\mathcal A^\beta)$ and the corresponding fractional elliptic operator $\mathcal A^\beta$ as
\begin{align*}
D(\mathcal A^\beta) & :=\left\{u\in L^2(\Omega)\mid\|u\|_{D(\mathcal A^\beta)}^2:=\sum_{n=1}^\infty\left|\lambda_n^\beta(u,\varphi_n)\right|^2<\infty\right\},\\
\mathcal A^\beta u & :=\sum_{n=1}^\infty\lambda_n^\beta(u,\varphi_n)\varphi_n.
\end{align*}
 Henceforth we set $D(\mathcal A^{-\beta}) = (D(\mathcal A^\beta))'$, which is the vector space consisting of all bounded linear functionals on $D(\mathcal A^\beta)$. We denote the parting of $\psi\in D(\mathcal A^{-\beta})$ and $u\in D(\mathcal A^\beta)$ by ${ }_{-\beta}\langle\psi, u\rangle_\beta$. We note that $D(\mathcal A^{-\beta})$ is a Hilbert space with the norm
\[
\|u\|_{D(\mathcal A^{-\beta})}=\left(\sum_{n=1}^\infty\lambda_n^{-2\beta}|{ }_{-\beta}\langle u,\varphi_n\rangle_\beta|^2\right)^{1/2}.
\]

Based on the above settings, we state the well-posedness of the forward problem \eqref{eq-gov}.

\begin{thm}\label{thm2.2}
If $\phi\in L^2(\Omega),$ $f\in L^2(\Omega)$ and $\mu\in L^2(0,T),$ then there exists a unique solution to \eqref{eq-gov} such that $u\in L^2(0,T;D(\mathcal A^{\beta/2}))$ and $u-\phi\in H_\alpha(0,T;D(\mathcal A^{-\beta/2}))$. Moreover, the solution allows the explicit representation
\begin{align}
u(x, t) & =\sum_{n=1}^\infty E_{\alpha,1}(-\lambda_n^\beta t^\alpha)\phi_n\varphi_n(x)\nonumber\\
& \quad\,+\sum_{n=1}^\infty\int_0^t\mu(\tau)(t-\tau)^{\alpha-1}E_{\alpha,\alpha}(-\lambda_n^\beta(t-\tau)^\alpha)\,\mathrm d\tau f_n\varphi_n(x),\label{eq3.10}
\end{align}
where $\phi_n=\left(\phi,\varphi_n\right)$ and $f_n=(f,\varphi_n)$. Further, we have the estimate
\[
\|u\|_{L^2(0,T;D(\mathcal A^{\beta/2}))}+\|u-\phi \|_{H_\alpha(0,T;D(\mathcal A^{-\beta/2}))}\leq C\left(\|\phi\|_{L^2(\Omega)}+\|\mu\|_{L^2(0,T)} \|f\|_{L^2(\Omega)}\right),
\]
where $C>0$ is a constant depending on $\alpha,T,\Omega$.
\end{thm}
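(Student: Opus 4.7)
The plan is to follow the standard Fourier--Mittag-Leffler approach. Writing formally $u(x,t)=\sum_n u_n(t)\varphi_n(x)$ and expanding the data as $\phi=\sum_n\phi_n\varphi_n$, $f=\sum_n f_n\varphi_n$, projecting \eqref{eq-gov} onto $\varphi_n$ reduces the PDE to the scalar fractional ODE
\[
\partial_t^\alpha(u_n-\phi_n)+\lambda_n^\beta u_n=f_n\mu(t),\qquad u_n-\phi_n\in H_\alpha(0,T).
\]
Setting $v_n:=u_n-\phi_n$, a Laplace-transform computation (rigorously justified via the framework of Kubica--Ryszewska--Yamamoto \cite{KR20}) gives the solution of $\partial_t^\alpha v+\lambda v=g$ with $v\in H_\alpha$ as $v(t)=\int_0^t(t-\tau)^{\alpha-1}E_{\alpha,\alpha}(-\lambda(t-\tau)^\alpha)g(\tau)\,\mathrm d\tau$. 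Applying this with $g(\tau)=-\lambda_n^\beta\phi_n+f_n\mu(\tau)$ and using the identity $\int_0^t s^{\alpha-1}E_{\alpha,\alpha}(-\lambda s^\alpha)\,\mathrm d s=\lambda^{-1}[1-E_{\alpha,1}(-\lambda t^\alpha)]$ (a consequence of Lemma \ref{lem2.13}) yields precisely the scalar form of the representation \eqref{eq3.10}.

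The second step is to show convergence of \eqref{eq3.10} in $L^2(0,T;D(\mathcal{A}^{\beta/2}))$ with the claimed bound. Lemma \ref{lem2.1} delivers $|E_{\alpha,\gamma}(-\lambda_n^\beta t^\alpha)|\le C/(1+\lambda_n^\beta t^\alpha)$ for $\gamma\in\{1,\alpha\}$. The substitution $s=\lambda_n^\beta t^\alpha$ then gives the uniform estimate $\int_0^T\lambda_n^\beta|E_{\alpha,1}(-\lambda_n^\beta t^\alpha)|^2\,\mathrm d t\le C_{\alpha,T}$, so the homogeneous part of the series is controlled by $\|\phi\|_{L^2(\Omega)}^2$ by Parseval. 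For the inhomogeneous part, the inner integral is the convolution $\mu*g_n$ with $g_n(t):=t^{\alpha-1}E_{\alpha,\alpha}(-\lambda_n^\beta t^\alpha)$; Lemma \ref{lem2.13} yields $\|g_n\|_{L^1(0,T)}\le\lambda_n^{-\beta}$, so Young's convolution inequality gives $\|\mu*g_n\|_{L^2(0,T)}\le\lambda_n^{-\beta}\|\mu\|_{L^2(0,T)}$ and consequently
\[
\sum_n\lambda_n^\beta|f_n|^2\|\mu*g_n\|_{L^2(0,T)}^2\le C\|\mu\|_{L^2(0,T)}^2\|f\|_{L^2(\Omega)}^2.
\]

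Third, I would upgrade this to the $H_\alpha(0,T;D(\mathcal{A}^{-\beta/2}))$ regularity of $u-\phi$. Since $u\in L^2(0,T;D(\mathcal{A}^{\beta/2}))$ implies $\mathcal{A}^\beta u\in L^2(0,T;D(\mathcal{A}^{-\beta/2}))$ (by the spectral characterization of $D(\mathcal{A}^s)$ one loses two half-derivatives in space), and $f\mu\in L^2(0,T;L^2(\Omega))\hookrightarrow L^2(0,T;D(\mathcal{A}^{-\beta/2}))$, the equation $\partial_t^\alpha(u-\phi)=-\mathcal{A}^\beta u+f\mu$ forces $\partial_t^\alpha(u-\phi)\in L^2(0,T;D(\mathcal{A}^{-\beta/2}))$ with the right bound. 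Since $\partial_t^\alpha=(I^\alpha)^{-1}$ is by definition a bounded isomorphism from $H_\alpha(0,T)$ onto $L^2(0,T)$, this lifts $u-\phi$ to $H_\alpha(0,T;D(\mathcal{A}^{-\beta/2}))$ as required. Uniqueness at the PDE level then follows because any weak solution must have Fourier coefficients solving the same scalar fractional ODE, which has a unique $H_\alpha$ solution, so two solutions with the same data agree mode-by-mode.

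The main obstacle will be the rigorous termwise verification, especially the $H_\alpha$ characterization of the initial condition when $\alpha\le 1/2$, where $u(\cdot,0)$ is not classically defined and the weighted $L^2$-norm in the critical case $\alpha=1/2$ must be controlled uniformly in $n$. Handling this cleanly forces one to work directly with the $H_\alpha$-norm of $v_n=E_{\alpha,1}(-\lambda_n^\beta t^\alpha)\phi_n-\phi_n+f_n(g_n*\mu)(t)$ and to estimate the Slobodeckij double integral through the Mittag-Leffler decay; once this is done, everything else is bookkeeping built on Lemmas \ref{lem2.1} and \ref{lem2.13} and Young's inequality.
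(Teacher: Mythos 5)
Your proposal is correct and follows essentially the same route as the paper: derive \eqref{eq3.10} by eigenfunction expansion, establish the $L^2(0,T;D(\mathcal A^{\beta/2}))$ bound by splitting into the homogeneous and inhomogeneous parts and applying Lemma \ref{lem2.1}, Lemma \ref{lem2.13} and Young's convolution inequality, and then obtain the $H_\alpha(0,T;D(\mathcal A^{-\beta/2}))$ regularity of $u-\phi$ from the equation itself via $\partial_t^\alpha(u-\phi)=-\mathcal A^\beta u+f\mu$. The only difference is cosmetic: you spell out the scalar fractional ODE and the Laplace-transform derivation of the representation, which the paper compresses into one sentence.
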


\begin{proof}
Using the traditional argument of eigenfunction expansion, we can obtain a weak solution for the direct problem \eqref{eq-gov} as \eqref{eq3.10}. We divide the proof process into the two steps.\medskip

{\bf Step 1 } We prove $u\in  L^2(0,T;D(\mathcal A^{\beta/2}))$. Setting
\begin{align*}
u_1(x,t) & :=\sum_{n=1}^\infty E_{\alpha,1}(-\lambda_n^\beta t^\alpha)\phi_n\varphi_n(x),\\
u_2(x,t) & :=\sum_{n=1}^\infty\int_0^t\mu(\tau)(t-\tau)^{\alpha-1}E_{\alpha,\alpha}(-\lambda_n^\beta(t-\tau)^\alpha)\,\mathrm d\tau f_n\varphi_n(x),
\end{align*}
then we have $u(x,t)=u_1(x,t)+u_2(x,t)$ by the superposition principle. We estimate each term separately. For $u_1$, we have
\begin{align*}
\|u_1\|^2_{L^2(0,T;D(\mathcal A^{\beta/2}))} & =\int_0^T\sum_{n=1}^\infty\left(\lambda_n^{\beta/2}\phi_n E_{\alpha,1}(-\lambda_n^\beta t^\alpha)\right)^2\mathrm d t\\
& \leq C\int_0^T\sum_{n=1}^\infty\phi_n^2\left(\frac{\sqrt{\lambda_n^\beta t^\alpha}}{1+\lambda_n^\beta t^\alpha}\right)^2t^{-\alpha}\,\mathrm d t\\
& \leq C\|\phi\|_{L^2(\Omega)}^2\int_0^T t^{-\alpha}\,\mathrm d t\leq C\|\phi\|_{L^2(\Omega)}^2.
\end{align*}
For $u_2$, in view of Lemmas \ref{lem2.1}--\ref{lem2.13} and Young's inequality for convolutions, we can deduce that
\begin{align*}
\|u_2\|^2_{L^2(0,T;D(\mathcal A^{\beta/2}))} & =\sum_{n=1}^\infty\lambda_n^\beta f_n^2\left\|\int_0^t\mu(\tau)(t-\tau)^{\alpha-1}E_{\alpha,\alpha}(-\lambda_n^\beta(t-\tau)^\alpha)\varphi_n\,\mathrm d\tau\right\|_{L^2(0,T)}^2\\
& \leq\|\mu\|^2_{L^2(0,T)}\sum_{n=1}^\infty\left(\lambda_n^{\beta/2}\int_0^T t^{\alpha-1} E_{\alpha,\alpha}(-\lambda_n^\beta t^\alpha)dt\right)^2 f_n^2\\
& \leq C\|\mu\|^2_{L^2(0,T)}\sum_{n=1}^\infty\left(1-E_{\alpha,1}(-\lambda_n^{\beta/2}T^\alpha)\right)^2 f_n^2\\
& \leq C\|\mu\|^2_{L^2(0,T)}\|f\|^2_{L^2(\Omega)}.
\end{align*}
Therefore, based on the above results, we have $u\in   L^2(0,T;D(\mathcal A^{\beta/2}))$.\medskip 

{\bf Step 2 } We prove $u-\phi\in H_\alpha(0,T;D(\mathcal A^{-\beta/2}))$. Similarly as above, we estimate $u_1-\phi$ and $u_2$ separately. To this end, we first conclude from the results on pp.\! 140--141 in \cite{2006Theory} that
\begin{align*}
\partial_t^\alpha(u(x,t)-\phi(x)) & =D_t^\alpha(u(x,t)-\phi(x))=D_t^\alpha(u_1(x,t)-\phi(x))+D_t^\alpha u_2(x,t)\\
& =-\sum_{n=1}^\infty\phi_n\lambda_n^\beta E_{\alpha,1}(-\lambda_n^\beta t^\alpha)\varphi_n(x)\\
& \quad\,+\sum_{n=1}^\infty f_n\left\{\mu(t)-\lambda_n^\beta\int_0^t\mu(\tau)(t-\tau)^{\alpha-1}E_{\alpha,\alpha}(-\lambda_n^\beta(t-\tau)^\alpha)\,\mathrm d\tau\right\}\varphi_n(x)\\
&=-\mathcal A^\beta u(x,t)+f(x)\mu(t).
\end{align*}
Consequently, from the first step for the spatial regularity estimates, we arrive at
\begin{align*}
\|u-\phi\|_{H_\alpha(0,T;D(\mathcal A^{-\beta/2}))}^2 & =\|\partial_t^\alpha(u-\phi)\|_{L^2(0,T;D(\mathcal A^{-\beta/2}))}^2\\
& =\|-\mathcal A^\beta u+f\,\mu\|_{L^2(0,T;D(\mathcal A^{-\beta/2}))}^2\\
& \leq C\left(\|\phi\|_{L^2(\Omega)}+\|\mu\|_{L^2(0,T)} \|f\|_{L^2(\Omega)}\right)^2.
\end{align*}
Therefore, collecting all the above results, we have $u\in   L^2(0,T;D(\mathcal A^{\beta/2}))$ and $u-\phi\in H_\alpha(0,T;D(\mathcal A^{-\beta/2}))$.
\end{proof}

\section{Proof of Theorem \ref{thm-isp}}\label{sec3}

Before we begin to prove Theorem \ref{thm-isp}, we prepare the following two lemmas.

\begin{lem}[Unique continuation]\label{lem-ucp}
Let ${\phi\in L^2(\Omega)}$, $f=0$ and $u$ be the solution to \eqref{eq-gov}. Then for any nonempty subdomain $\omega\subset\Omega,$ $u=0$ in $\omega\times(0,T)$ implies $u\equiv0$ in $\Omega\times(0,T)$.
\end{lem}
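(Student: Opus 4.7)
The approach reduces the time-fractional unique continuation to the classical elliptic unique continuation through a Laplace transform in time followed by a residue analysis on the Fourier side. Since $f=0$, Theorem \ref{thm2.2} gives
\[
u(x,t)=\sum_{n=1}^\infty\phi_n\,E_{\alpha,1}(-\lambda_n^\beta t^\alpha)\varphi_n(x),
\]
with $\phi_n=(\phi,\varphi_n)$. Because $E_{\alpha,1}$ is entire and Lemma \ref{lem2.1} provides the decay $|E_{\alpha,1}(-\lambda_n^\beta t^\alpha)|\le C/(1+\lambda_n^\beta t^\alpha)$, the map $t\mapsto u(\,\cdot\,,t)$ extends to an $L^2(\omega)$-valued holomorphic function on a sector around $(0,\infty)$. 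Therefore the hypothesis $u=0$ in $\omega\times(0,T)$ propagates by analytic continuation to $u=0$ in $\omega\times(0,\infty)$.

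Next I would take the Laplace transform in $t$. For $\mathrm{Re}\,z>0$, using $\mathcal L[E_{\alpha,1}(-\lambda t^\alpha)](z)=z^{\alpha-1}/(\lambda+z^\alpha)$ and interchanging the summation with the integral (again justified by Lemma \ref{lem2.1}), one obtains
\[
0=\int_0^\infty e^{-zt}u(\,\cdot\,,t)\,\mathrm dt=z^{\alpha-1}\sum_{n=1}^\infty\frac{\phi_n}{\lambda_n^\beta+z^\alpha}\varphi_n\quad\text{in }L^2(\omega).
\]
Setting $\eta=z^\alpha$, the $L^2(\omega)$-valued function $F(\eta):=\sum_{n\ge1}\phi_n\varphi_n/(\lambda_n^\beta+\eta)$ is meromorphic on $\mathbb C\setminus\{-\lambda_n^\beta\}$, vanishes on an open real interval, and hence vanishes identically on its domain.

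Grouping terms by the distinct eigenvalues $\{\mu_k\}_{k\ge1}$ of $\mathcal A$ with finite-dimensional eigenspaces spanned by $\{\varphi_n\}_{n\in I_k}$, the residue of $F$ at $\eta=-\mu_k^\beta$ produces
\[
\psi_k:=\sum_{n\in I_k}\phi_n\varphi_n=0\text{ in }\omega,\qquad k=1,2,\ldots.
\]
Each $\psi_k$ is a finite linear combination of Dirichlet eigenfunctions of $\mathcal A$ with eigenvalue $\mu_k$, so it solves the second-order local elliptic equation $(\mathcal A-\mu_k)\psi_k=0$ in $\Omega$. Since the principal coefficients $a_{ij}$ are $C^1(\overline\Omega)$ and $\Omega$ is connected, the classical Aronszajn-Cordes unique continuation principle forces $\psi_k\equiv0$ in $\Omega$. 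Orthonormality of $\{\varphi_n\}$ then yields $\phi_n=0$ for every $n\in I_k$, and since this holds for every $k$ we conclude $\phi\equiv0$, whence $u\equiv0$ in $\Omega\times(0,T)$.

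The principal obstacle I anticipate is technical rather than conceptual: the term-by-term Laplace transform and the residue extraction must be carried out in an $L^2(\omega)$-valued (or at least distributional) sense, since pointwise convergence of the series for $F(\eta)$ off the spectrum is delicate. Once that functional-analytic framework is fixed, the essential content of the proof is the spatial unique continuation for the \emph{local} operator $\mathcal A-\mu_k$, which neatly sidesteps the nonlocality introduced by $\mathcal A^\beta$.
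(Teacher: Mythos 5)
Your proposal is correct and follows essentially the same route as the paper's own proof: analytic extension of $u$ in time, Laplace transform of the eigenfunction expansion, the substitution $\eta=z^\alpha$ followed by meromorphic continuation, extraction of the eigenprojections at the poles $-\lambda_n^\beta$ (your residue computation is the paper's small-circle contour integral), and finally the classical unique continuation for the local elliptic equation $(\mathcal A-\lambda_n)u_n=0$. The only cosmetic difference is that you phrase the pole extraction as a residue rather than a contour integral, which is the same step.
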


\begin{proof}
The argument resembles that for Sakamoto and Yamamoto \cite[Theorem 4.2]{SY11}, whereas we still provide a proof here for completeness.

Similarly to \cite[Theorem 2.1(i)]{SY11}, one can readily show that the solution $u$ to the initial-boundary value problem \eqref{eq-gov} can be analytically extended from $(0,T)$ to $(0,\infty)$. For simplicity, we still denote this extension by $u$. As was demonstrated in the proof of \cite[Lemma 4.1]{2015Initial-boundary}, the Laplace transform $\widehat u(\,\cdot\,;s)$ of the solution $u(\,\cdot\,,t)$ to \eqref{eq-gov} reads
\[
\widehat u(\,\cdot\,;s)=\sum_{n=1}^\infty\frac{s^{\alpha-1}(\phi,\varphi_n)}{s^\alpha+\lambda^\beta_n}\varphi_n,\quad\mathrm{Re}\,s>s_1,
\]
where $s_1>0$ is a sufficiently large constant. Then it follows from $u=0$ in $\omega\times(0,T)$ that
\[
\sum_{n=1}^\infty\frac{s^{\alpha-1}(\phi,\varphi_n)}{s^\alpha+\lambda^\beta_n}\varphi_n=0\quad\mbox{in }\omega,\ \mathrm{Re}\,s>s_1.
\]
We see that $s^\alpha$ varies over some domain $U\subset\mathbb C$ as $s$ varies over $\mathrm{Re}\,s>s_1$. Therefore, we obtain
\begin{equation}\label{eq3.1}
\sum_{n=1}^\infty\frac{(\phi,\varphi_n)}{\eta+\lambda^\beta_n}\varphi_n=0\quad\mbox{in }\omega,\ \eta=s^\alpha\in U.
\end{equation}
Moreover, we can analytically continue both sides of \eqref{eq3.1}, so that it holds for $\eta\in\mathbb C\setminus\{-\lambda^\beta_n\}_{n=1}^\infty$. Then for any $n=1,2,\dots$, we can take a sufficiently small circle centered at $-\lambda^\beta_n$ excluding distinct eigenvalues and integrate \eqref{eq3.1} on this circle to obtain
\[
u_n:=\sum_{\{k\mid\lambda_k=\lambda_n\}}(\phi,\varphi_k)\varphi_k=0\quad\mbox{in }\omega,\ \forall\,n=1,2,\dots.
\]
Simultaneously, it is readily seen that $u_n$ satisfies the elliptic equation $(\mathcal A-\lambda_n)u_n=0$ in $\Omega$. Then it follows from the unique continuation for elliptic equations that $u_n\equiv0$ in $\Omega$ for all $n=1,2,\dots$. Together with the complete orthogonality of $\{\varphi_n\}_{n=1}^{\infty}$ in $L^2(\Omega)$, this implies $\phi=0$ in $\Omega$ as the initial value of \eqref{eq-gov}. Consequently, the uniqueness for the forward problem guarantees $u\equiv0$ in $\Omega\times(0,T)$, which completes the proof.
\end{proof}

\begin{lem}[Duhamel's principle]\label{lem-dp}
Let $\phi=0,$ $f\in L^2(\Omega)$ and $\mu\in L^2(0,T)$. Then the unique weak solution $u$ to the initial-boundary value problem \eqref{eq-gov} allows the representation
\begin{equation}\label{eq3.2}
u(\,\cdot\,,t)=\int_0^t\theta(t-s)v(\,\cdot\,,s)\,\mathrm d s,\quad0<t<T,
\end{equation}
where $v$ solves the homogeneous problem
\[
\begin{cases}
\partial_t^\alpha v+\mathcal A^\beta v=0 & \mbox{in }\Omega\times(0,T),\\
v=f & \mbox{in }\Omega\times\{0\},\\
v=0 & \mbox{on }\partial\Omega\times(0,T)
\end{cases}
\]
and $\theta\in L^2(0,T)$ stands for the unique solution to the fractional integral equation
\begin{equation}\label{eq3.4}
I^{1-\alpha}\theta(t)=\mu(t),\quad0<t<T.
\end{equation}
\end{lem}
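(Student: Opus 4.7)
The plan is to reduce the identity \eqref{eq3.2} to a scalar convolution identity by expanding both sides in the eigenbasis $\{\varphi_n\}$ of $\mathcal A$, and then verify this scalar identity using the Laplace transform. Since $\phi=0$, Theorem \ref{thm2.2} gives
\[
u(\,\cdot\,,t)=\sum_{n=1}^\infty f_n\left(\int_0^t\mu(\tau)(t-\tau)^{\alpha-1}E_{\alpha,\alpha}(-\lambda_n^\beta(t-\tau)^\alpha)\,\mathrm d\tau\right)\varphi_n,
\]
while the same theorem (with the roles of the two terms exchanged, taking $f\equiv0$ and $\phi=f$ there) tells us that the homogeneous solution $v$ satisfies
\[
v(\,\cdot\,,s)=\sum_{n=1}^\infty f_n E_{\alpha,1}(-\lambda_n^\beta s^\alpha)\varphi_n.
\]
Substituting into the right-hand side of \eqref{eq3.2} and interchanging sum and integral (justified by the uniform bound in Lemma \ref{lem2.1}), the identity \eqref{eq3.2} becomes, mode by mode,
\[
\int_0^t\mu(\tau)(t-\tau)^{\alpha-1}E_{\alpha,\alpha}(-\lambda_n^\beta(t-\tau)^\alpha)\,\mathrm d\tau=\int_0^t\theta(t-s)E_{\alpha,1}(-\lambda_n^\beta s^\alpha)\,\mathrm d s,\quad n\ge1.
\]

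Both sides are convolutions on $(0,T)$, so I would apply the Laplace transform. Using the formula quoted in the preliminaries, $\mathcal L[E_{\alpha,1}(-\lambda_n^\beta t^\alpha)](s)=s^{\alpha-1}/(s^\alpha+\lambda_n^\beta)$, together with $\mathcal L[t^{\alpha-1}E_{\alpha,\alpha}(-\lambda_n^\beta t^\alpha)](s)=1/(s^\alpha+\lambda_n^\beta)$ (obtained, for instance, from the previous formula by differentiation via Lemma \ref{lem2.13} and the relation $\mathcal L[f'](s)=s\mathcal L[f](s)-f(0)$, or directly from the series definition of $E_{\alpha,\alpha}$). The desired mode-wise identity then reduces to
\[
\widehat\mu(s)\cdot\frac1{s^\alpha+\lambda_n^\beta}=\widehat\theta(s)\cdot\frac{s^{\alpha-1}}{s^\alpha+\lambda_n^\beta},
\]
which is equivalent to $\widehat\mu(s)=s^{\alpha-1}\widehat\theta(s)$, i.e., to $\mathcal L[I^{1-\alpha}\theta](s)=\mathcal L[\mu](s)$. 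By the injectivity of the Laplace transform, this is precisely \eqref{eq3.4}, so defining $\theta$ as the solution of \eqref{eq3.4} makes \eqref{eq3.2} hold.

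It remains to address existence and uniqueness of $\theta\in L^2(0,T)$ for \eqref{eq3.4}; this will be the main technical obstacle, since \eqref{eq3.4} is an Abel-type integral equation of the first kind and for arbitrary $\mu\in L^2(0,T)$ one typically only recovers $\theta$ as a distribution. I would resolve this by appealing to the mapping properties of $I^{1-\alpha}$ in the Sobolev–Slobodeckij scale (cf.\! \cite{GL15,KR20}), which identify the inverse of $I^{1-\alpha}$ with the Riemann–Liouville derivative $D_t^{1-\alpha}$ and guarantee that $\theta=D_t^{1-\alpha}\mu\in L^2(0,T)$ under the regularity of $\mu$ implicit in the problem; uniqueness then follows from the injectivity of $I^{1-\alpha}$ on $L^2(0,T)$. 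Finally, to assemble the full identity \eqref{eq3.2} in the appropriate function space, I would estimate the tail of the series using Lemma \ref{lem2.1}, which gives the geometric decay $|E_{\alpha,\alpha}(-\lambda_n^\beta t^\alpha)|\lesssim(1+\lambda_n^\beta t^\alpha)^{-1}$ needed to interchange the summation with the convolution against $\theta$ and obtain convergence in $L^2(0,T;L^2(\Omega))$.
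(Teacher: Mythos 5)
Your argument is correct and is essentially the standard proof that the paper itself defers to (the paper omits the proof of this lemma, citing the parallel Lemma 4.1 of \cite{2016Strong}): eigenfunction expansion reduces \eqref{eq3.2} to a mode-wise convolution identity, which the Laplace-transform formulas for $E_{\alpha,1}(-\lambda_n^\beta t^\alpha)$ and $t^{\alpha-1}E_{\alpha,\alpha}(-\lambda_n^\beta t^\alpha)$ convert exactly into \eqref{eq3.4}. Your caveat about the $L^2(0,T)$-solvability of the Abel equation \eqref{eq3.4} for a general $\mu\in L^2(0,T)$ is well taken, but that is an issue with the hypotheses of the lemma as stated (one really needs $\mu$ in the range of $I^{1-\alpha}$) rather than a gap in your argument.
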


The above conclusion is parallel to \cite[Lemma 4.1]{2016Strong} for the case of a non-fractional elliptic part, hence we omit the proof here.

Now we are well prepared to give the proof of the main theorem.

\begin{proof}[\bf Proof of Theorem \ref{thm-isp}]
Let $u$ satisfy the initial-boundary value problem \eqref{eq-gov} with $\phi=0$, $f\in L^2(\Omega)$ and $\mu\in L^2(0,T)$. Then $u$ takes the form of \eqref{eq3.2} according to Lemma \ref{lem-dp}. Performing the Riemann-Liouville integral operator $I^{1-\alpha}$ on both sides of \eqref{eq3.2}, we deduce
\begin{align*}
I^{1-\alpha}u(\,\cdot\,,t) & =\int_0^t\frac{(t-\tau)^{-\alpha}}{\Gamma(1-\alpha)}\int_0^\tau\theta(\tau-\xi)v(\,\cdot\,,\xi)\,\mathrm d\xi\mathrm d\tau\\
&=\int_0^t v(\,\cdot\,,\xi)\int_\xi^t\frac{(t-\tau)^{-\alpha}}{\Gamma(1-\alpha)}\theta(\tau-\xi)\,\mathrm d\tau\mathrm d\xi\\
& =\int_0^t v(\,\cdot\,,\xi)\int_0^{t-\xi}\frac{(t-\xi-\tau)^{-\alpha}}{\Gamma\left(1-\alpha\right)}\theta(\tau)\,\mathrm d\tau\mathrm d\xi\\
&=\int_0^t v(\,\cdot\,,\xi)I^{1-\alpha}\theta(t-\xi)\,\mathrm d\xi\\
&=\int_0^t\mu(t-\tau)v(\,\cdot\,,\tau)\,\mathrm d\tau,
\end{align*}
where we applied Fubini's theorem and used the relation \eqref{eq3.4}. 

Since the Caputo and the Riemann-Liouville derivatives coincide when the initial value vanishes, we conclude from the governing equation in \eqref{eq-gov} that
\[
D_t^\alpha u=\partial_t^\alpha u=-\mathcal A^\beta u+f\mu=0\quad\mbox{in }\omega\times(T_1,T)
\]
in view of the assumption $u=0$ in $\omega\times(T_1,T)$ and $f=0$ in $\omega$. Together with $u=0$ in $\omega\times(T_1,T)$, it follows from \cite[Theorem 1]{2019An} that $u=0$  in $\omega\times (0,T)$ and therefore
\[
I^{1-\alpha}u(\,\cdot\,,t)=\int_0^t\mu(t-\tau)v(\cdot,\tau)\mathrm d\tau=0\quad\mbox{in }\omega\times(0,T).
\]

According to the Titchmarsh convolution theorem (see \cite[TheoremVII]{T26}), there exist constants $\tau_1,\tau_2\in[0,T]$ satisfying $\tau_1+\tau_2\ge T$ such that
\[
\mu\equiv0\mbox{ in }(0,\tau_1),\quad v\equiv0\mbox{ in }\omega\times(0,\tau_2).
\]
Owing to the key assumption $\mu\not\equiv0$ in $(0,T_0)\subset(0,T)$ in \eqref{eq1.2}, we conclude $\tau_1<T_0$ and thus $\tau_2\ge T-\tau_1>T-T_0\ge0$ or equivalently $v=0$ in $\omega\times(0,\tau_2)$. Then the direct application of Lemma \ref{lem-ucp} immediately indicates $v\equiv0$ in $\Omega\times(0,\tau_2)$ and consequently $f=0$ in $\Omega$ as the initial value of $v$.
\end{proof}

\section{Levenberg-Marquardt method}\label{sec4}

In the sequel, by $u(x,t;f)$ we denote the unique solution to \eqref{eq-gov} in order to emphasize its dependency on $f$. Based on Theorem \ref{thm2.2}, we define a forward operator
\[
\mathcal F:L^2(\Omega\setminus\omega)\ni f\longmapsto u(\,\cdot\,,\,\cdot\,;f)|_{\omega\times(T_1,T)}\in L^2(\omega\times(T_1,T)).
\]

In order to recover the space-dependent source $f$ by the observation data of $u$ in $\omega\times(T_1,T)$, we adopt the Levenberg-Marquardt method as follows. Firstly, we consider the following minimization problem. Take an initial guess $f^0$ and suppose the $k$th 
step approximation $f^k$ has been obtained. Then we should find the $(k+1)$th step approximation $f^{k+1}$ by solving
the minimization problem
\begin{align}
f^{k+1} & =\arg\min\left\{\frac12\left\|\mathcal F(f^k)-u^\delta+\mathcal F'(f- f^k)\right\|_{L^2(\omega\times(T_1,T))}^2\right.\nonumber\\
& \qquad\qquad\quad\:\left.+\frac{\rho_{k+1}}2\left\|f- f^k\right\|_{L^2(\Omega\setminus\omega)}^2\right\},\label{eq5.2}
\end{align}
where $\rho_{k+1}>0$ is regularization parameter at the $(k+1)$th step, and $\mathcal F'$ denotes the Fr\'echet derivative of $\mathcal F$ to $f$ at $f^k$. Besides, $u^\delta$ is the noisy data satisfying
\[
\left\|u(\,\cdot\,,\,\cdot\,;f_{\mathrm{true}})-u^\delta\right\|_{L^2(\omega\times(T_1,T))}\le\delta,
\]
where $f_{\mathrm{true}}\in L^2(\Omega)$ and $\delta>0$ stand for the true solution and the noise level, respectively.

In the following, we use a finite dimensional approximation algorithm to solve the minimization problem \eqref{eq5.2}. Suppose that $\{\chi_n\mid n=1,2,\dots\}$ is an appropriate set of basis functions in $L^2(\Omega\setminus\omega)$. Let
\[
f^{k+1}(x)\approx\sum_{n=1}^N a_n^{k+1}\chi_n(x)\quad\mbox{and}\quad f^k(x)\approx\sum_{n=1}^N a_n^k\chi_n(x),
\]
where $N\in\mathbb N$ and $a_n^{k+1},a_n^k$ ($n=1,2,\dots,N$) are the expansion coefficients. We set
\[
\Phi^N=\operatorname{span}\left\{\chi_1,\chi_2,\dots,\chi_N\right\}
\]
and two vectors $\bm a^{k+1}=(a_1^{k+1},\dots,a_N^{k+1}),\bm a^k=(a_1^k,\dots,a_N^k)\in\mathbb R^N$. We identify approximations $f^{k+1},f^k\in\Phi^N$ with vectors $\bm{a}^{k+1},\bm{a}^k\in\mathbb R^N$, respectively.

Next, we give an inversion algorithm for determining $f$. Based on the above discussions, solving problem \eqref{eq5.2} can be transformed to solving the following minimization problem:
\begin{align}
& \min_{\bm a\in\mathbb R^N}\left\{\frac12\left\|u(\,\cdot\,,\,\cdot\,;\bm a^k)-u^\delta+\nabla_{\bm a^k}u(\,\cdot\,,\,\cdot\,;\bm a^k)(\bm a-\bm a^k)^{\mathrm T}\right\|_{L^2(\omega\times(T_1,T))}^2\right.\nonumber\\
& \qquad\quad\left.+\frac{\rho_{k+1}}2\left\|f-f^k\right\|_{L^2(\Omega\setminus\omega)}^2\right\},\label{eq5.8}
\end{align}
where
\[
\nabla_{\bm a^k}u(\,\cdot\,,\,\cdot\,;\bm a^k)=\left(\frac\partial{\partial a_1^k}u(\,\cdot\,,\,\cdot\,;\bm a^k),\dots,\frac\partial{\partial a_N^k}u(\,\cdot\,,\,\cdot\,;\bm a^k)\right).
\]
We approximate
\[
\frac\partial{\partial a_n^k}u(\,\cdot\,,\,\cdot\,;\bm a^k)\approx\frac{u(\,\cdot\,,\,\cdot\,;a_1^k,\dots, a_n^k+\tau,\dots,a_N^k)-u(\,\cdot\,,\,\cdot\,;a_1^k,\dots,a_n^k,\dots,a_N^k)}\tau
\]
with a suitably chosen numerical differentiation step $\tau$.

The minimizer of \eqref{eq5.8} is simply denoted by $\bm a^{k+1}$ and we write
\begin{equation}\label{eq5.11}
\bm a^{k+1}=\bm a^k+\delta\bm a^k,\quad k=1,2,\dots,
\end{equation}
where $\delta\bm a^k=(\delta a_1^k,\dots,\delta a_N^k)$ is called a perturbation of $\bm a^k$. Thus, in order to get ${\bm a^{k+1}}$, it suffices to compute an optimal perturbation $\delta\bm a^k$. Then the problem \eqref{eq5.8} becomes
\begin{align}
& \min_{\delta\bm a^k\in\mathbb R^N}\left\{\left\|\nabla_{\bm a^k}u(\,\cdot\,,\,\cdot\,;\bm a^k)(\delta\bm a^k)^{\mathrm T}-\left(u^\delta-u(\,\cdot\,,\,\cdot\,;\bm a^k)\right)\right\|_{L^2(\omega\times(T_1,T))}^2\right.\nonumber\\
& \qquad\quad\;\;\;+\delta\bm a^k A^k(\delta\bm a^k)^{\mathrm T}\bigg\},\label{eq5.12}
\end{align}
where $A^k=\operatorname{diag}(\rho_{k+1}((\chi_{i},\chi_{j})_{L^2(\Omega\setminus\omega)})_{N \times N})$.
Let
\[
Q^k=\left(\left(\frac\partial{\partial a_i^k}u(\,\cdot\,,\,\cdot\,;\bm a^k),\frac\partial{\partial a_j^k}u(\,\cdot\,,\,\cdot\,;\bm a^k)\right)_{L^2(\omega\times(T_1,T))}\right)_{N \times N}
\]
and
\[
\bm W^k=\left(\left(u^\delta-u(\,\cdot\,,\,\cdot\,;\bm a^k),\frac\partial{\partial a_i^k}u(\,\cdot\,,\,\cdot\,;\bm a^k)\right)_{L^2(\omega\times(T_1,T))}\right)_{N \times 1} .
\]
We readily verify that minimizing \eqref{eq5.12} is reduced to solving the following normal equation
\[
(Q^k+{A}^k)(\delta\bm a^k)^{\mathrm T}=\bm W^k.
\]
Then by the iteration procedure \eqref{eq5.11}, the optimal approximate solution can be obtained as long as arriving at the given number of iterations.

\section{Numerical experiments}\label{sec5}

In this section, we aim at numerically recovering the unknown spatial component $f(x)$ of the source term in $\Omega\setminus\omega$ from the noisy observation data $u^\delta$ measured in $\omega\times(T_1,T)$. The noisy data is generated by adding a random perturbation to the noiseless data, i.e.,
\[
u^\delta(x,t)=(1+\epsilon\,\mathrm{rand}(-1,1))u(x,t;f_{\mathrm{true}}),\quad(x,t)\in\omega\times(T_1,T),
\]
where $\mathrm{rand}(-1,1)$ denotes a random number distributed uniformly in $(-1,1)$. The corresponding noise level is then calculated by $\delta=\|u^\delta-u(\,\cdot\,,\,\cdot\,;f_{\mathrm{true}})\|_{L^2(\omega\times(T_1,T))}$, where we recall that $\omega$ is a nonempty subdomain of $\Omega$. To evaluate the accuracy of the numerical solution, we compute the relative error defined by
\[
\mathrm{err}=\frac{\|f_K-f_{\mathrm{true}}\|_{L^2(\Omega\setminus\omega)}}{\|f_{\mathrm{true}}\|_{L^2(\Omega\setminus\omega)}}
\]
with the number $K$ of iterations, where $f_K$ is regarded as the reconstructed solution produced by Levenberg-Marquardt method.

The residual $E_k$ at the $k$th iteration is given by
\[
E_k=\left\|u(\,\cdot\,,\,\cdot\,;f^k)-u^\delta\right\|_{L^2(\omega\times(T_1,T))}.
\]
As for the stopping criteria, in this study we use the well-known discrepancy principle, i.e., we choose $K$ satisfying the following inequality
\[
E_K\leq\eta\delta<E_{K-1},
\]
where $\eta>1$ is a constant usually taken heuristically to be 1.01. If the noise level is $0$, then we take $K=40$ for the following examples. The regularization parameter $\rho_k$ at the $k$th step is selected as the sigmoid-type function based on its properties given by
\[
\rho_k=\frac1{1+\exp\left(\gamma_0\left(k-k_0\right)\right)},
\]
where $k_0,\gamma_0>0$ are some tuning parameters chosen in advance.

In the numerical example, we simply set $\Omega=(0,1)$, $\mathcal A=-\partial_x^2$, $T=1$ and select the tuning parameters as $\gamma_0=0.8$ and $k_0=4$. We choose the initial guess and the observation subdomain as $f_0\equiv0$ and $\omega=(0,0.06)$, respectively. For the starting time $T_1$ of the observation, we will change its value to see its influence on the numerical performance. The noiseless data $u(x,t;f_{\mathrm{true}})$ in $\omega\times(T_1,T)$ is obtained by solving the direct problem \eqref{eq-gov} numerically. 

We test the following two choices of true solutions:
\[
f_{\mathrm{true}}^1(x)=x^4+x\sin(\pi x),\quad f_{\mathrm{true}}^2(x)=\mathrm e^{-\pi x^2}+\cos(2\pi x).
\]
Reconstruction results of the source term in $\Omega\setminus\omega$ with various noise levels are shown in Figures \ref{fig1}--\ref{fig2}. We can observe that our algorithm performs worse as $T_1$ increases. This result is definitely consistent with our expectation because the reduction in available information makes the inversion of the algorithm more difficult. We can see from Figures \ref{fig1}--\ref{fig2} and Tables \ref{tab1}--\ref{tab2} that the numerical results match the true solution rather satisfactory up to $1\%$ noise added in the noiseless data for different choices of $\alpha$ and $\beta$. Finally, based on the analysis of numerical results, we verify the validity of the theory and the stability of the algorithm.

\begin{figure}[ht!]\centering
\subfigure[$(\alpha,\beta)=(0.5,0.7)$, $T_1=0.1$.]{\resizebox*{7cm}{4.5cm}{\includegraphics{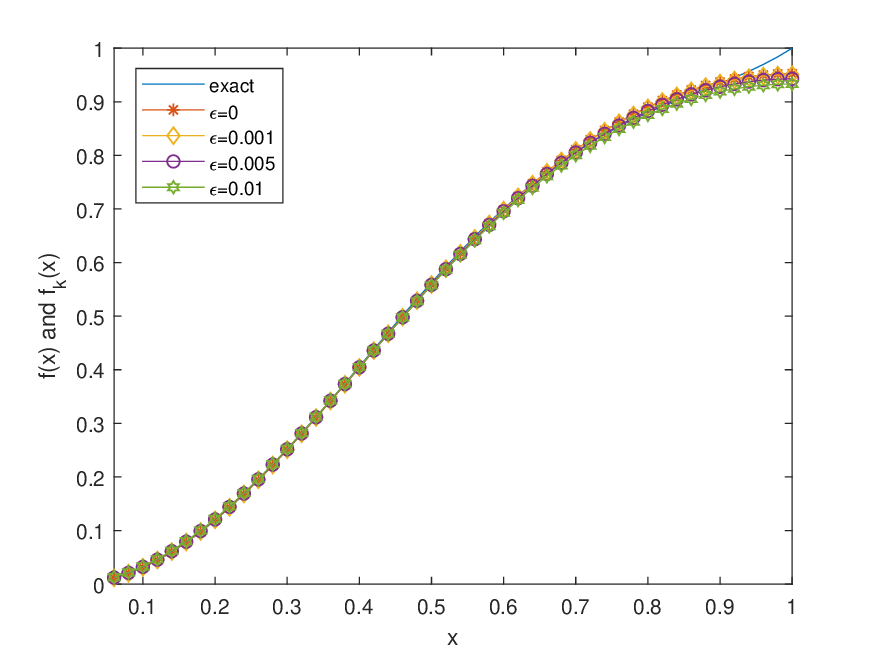}}}
\subfigure[$(\alpha,\beta)=(0.5,0.7)$, $T_1=0.3$.]{\resizebox*{7cm}{4.5cm}{\includegraphics{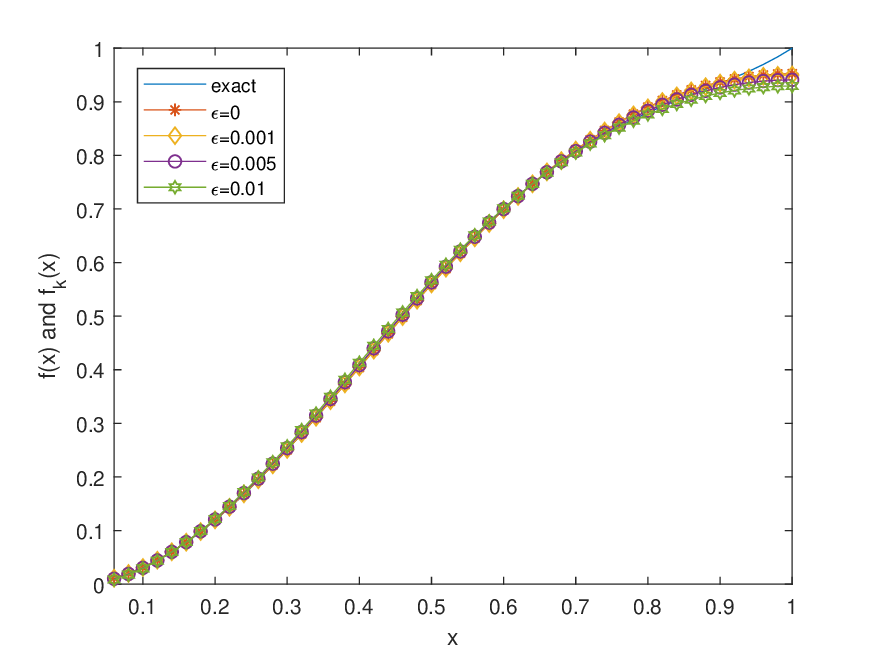}}}
\subfigure[$(\alpha,\beta)=(0.6,0.8)$, $T_1=0.1$.]{\resizebox*{7cm}{4.5cm}{\includegraphics{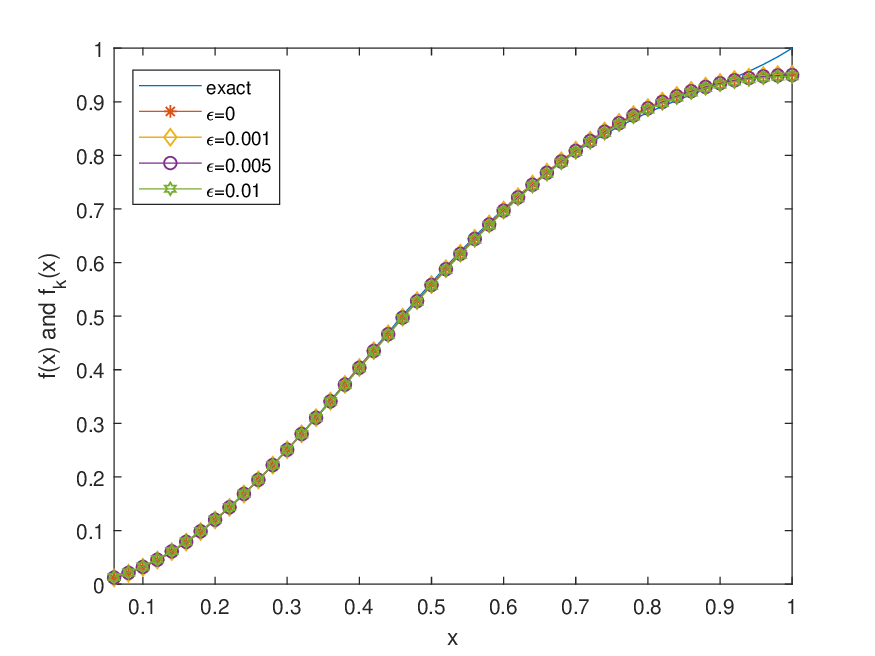}}}
\subfigure[$(\alpha,\beta)=(0.6,0.8)$, $T_1=0.3$.]{\resizebox*{7cm}{4.5cm}{\includegraphics{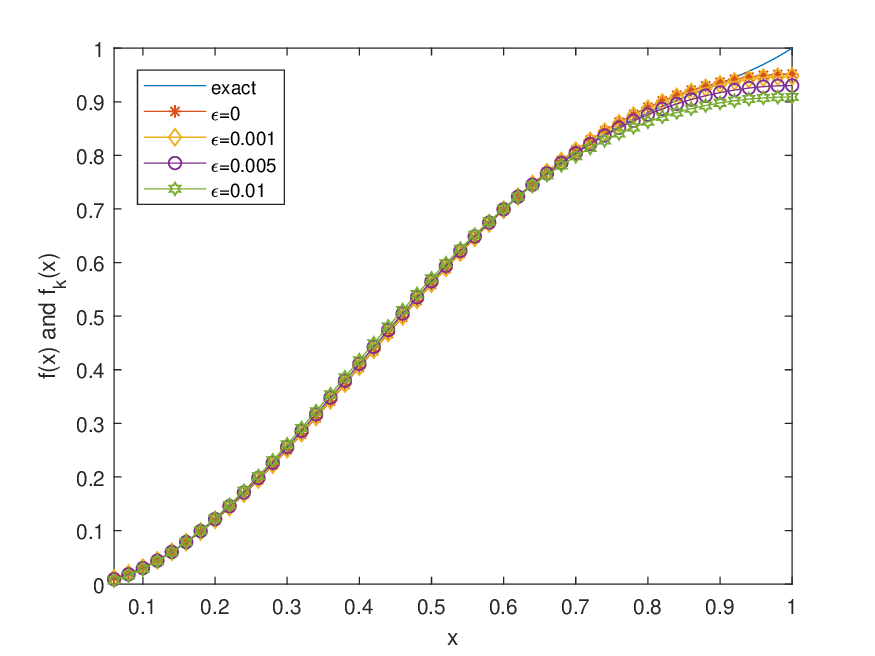}}}
\caption{Reconstruction of the source term $f_{\mathrm{true}}^1$ in $\Omega\setminus\omega$ with various noise levels.}\label{fig1}
\end{figure}	

\begin{figure}[ht!]\centering
\subfigure[$(\alpha,\beta)=(0.5,0.7)$, $T_1=0.1$.]{\resizebox*{7cm}{4.5cm}{\includegraphics{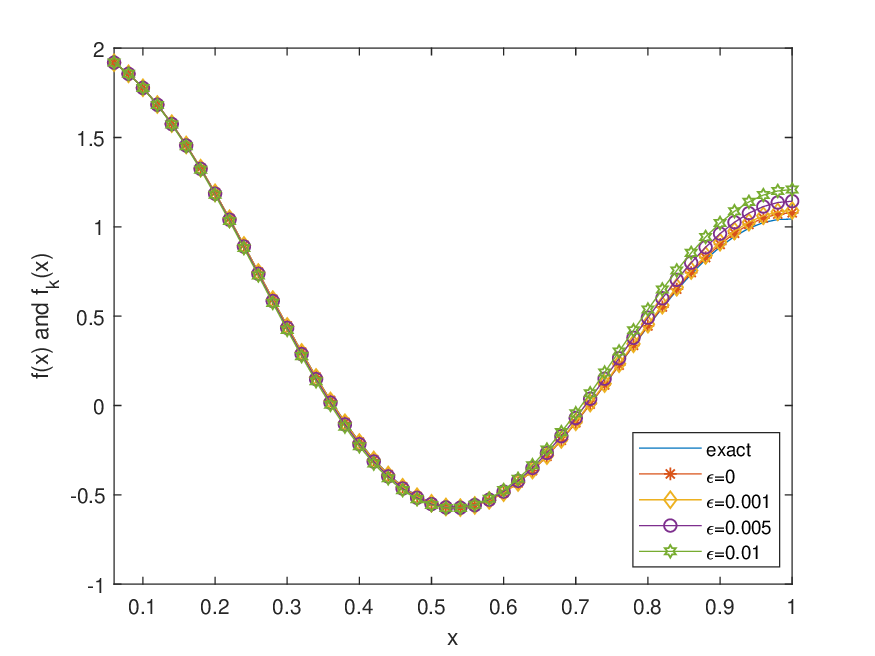}}}
\subfigure[$(\alpha,\beta)=(0.5,0.7)$, $T_1=0.3$.]{\resizebox*{7cm}{4.5cm}{\includegraphics{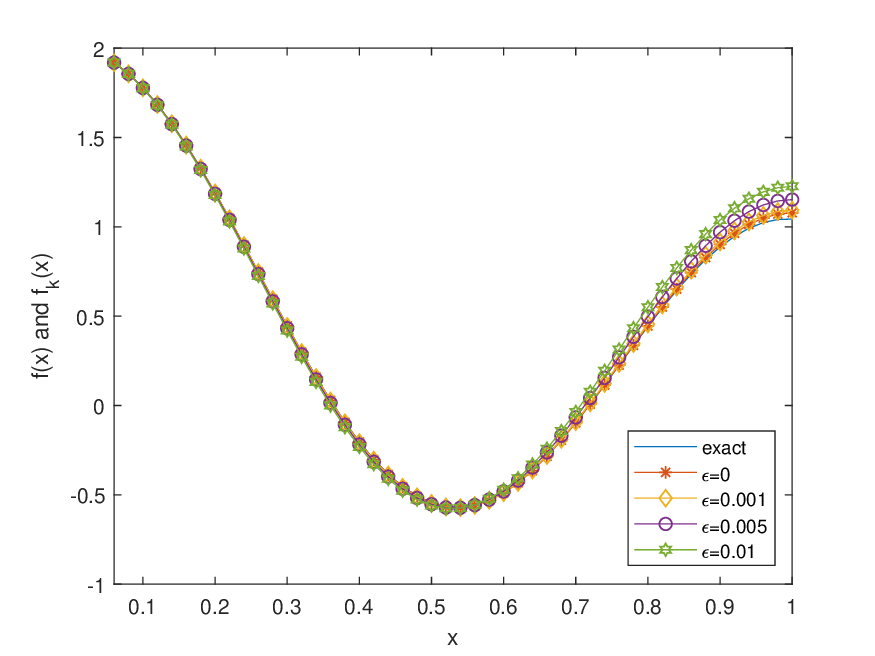}}}
\subfigure[$(\alpha,\beta)=(0.6,0.8)$, $T_1=0.1$.]{\resizebox*{7cm}{4.5cm}{\includegraphics{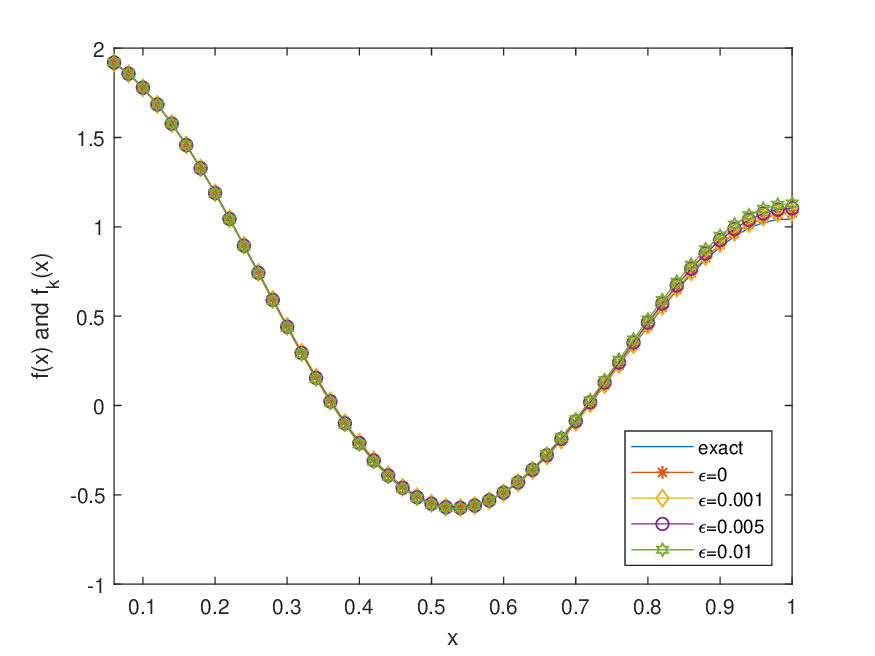}}}
\subfigure[$(\alpha,\beta)=(0.6,0.8)$, $T_1=0.3$.]{\resizebox*{7cm}{4.5cm}{\includegraphics{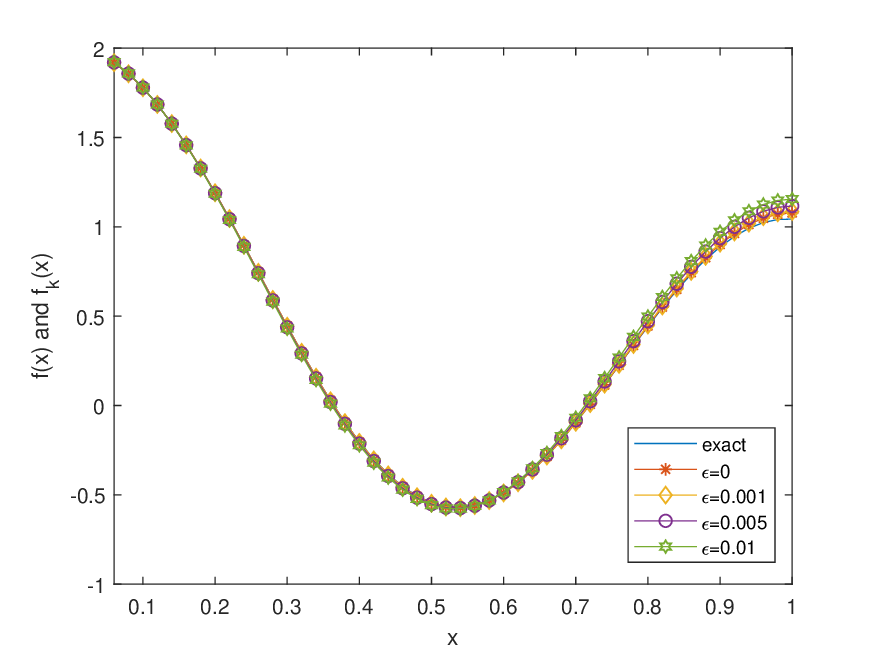}}}
\caption{Reconstruction of the source term $f_{\mathrm{true}}^2$ in $\Omega\setminus\omega$ with various noise levels.}\label{fig2}
\end{figure}

\begin{table}[ht!]\centering
\caption{Relative errors for $f_{\mathrm{true}}^1$ with various noise levels.}\label{tab1}
\small\tabcolsep 0.5pt
\begin{tabular*}{0.7\textwidth}{@{\extracolsep\fill}lcccc}
\toprule 
$\epsilon$ & (a) & (b) &(c) & (d)\\
\midrule
$0$ & $0.0153$ & $0.0153$ & $0.0154$ & $0.0154$\\
$0.001$ & $0.0157$ & $0.0158$ & $0.0155$ & $0.0165$\\
$0.005$ & $0.0184$ & $0.0191$ & $0.0159$ & $0.0247$\\
$0.01$ & $0.0237$ & $0.0254$ & $0.0167$ & $0.0384$\\
\bottomrule
\end{tabular*}
\end{table}

\begin{table}[ht!]
\centering
\caption{Relative errors for $f_{\mathrm{true}}^2$ with various noise levels.}\label{tab2}
\small\tabcolsep 0.5pt
\begin{tabular*}{0.7\textwidth}{@{\extracolsep\fill}lcccc}
\toprule 
$\epsilon$ & (a) & (b) &(c) & (d)\\
\midrule
$0$ & $0.0103$ & $0.0103$ & $0.0102$ & $0.0102$\\
$0.001$ & $0.0163$ & $0.0172$ & $0.0125$ & $0.0136$\\
$0.005$ & $0.0430$ & $0.0477$ & $0.0227$ & $0.0289$\\
$0.01$ & $0.0773$ & $0.0868$ & $0.0362$ & $0.0488$\\
\bottomrule
\end{tabular*}
\end{table}

\section{Conclusion}\label{sec6}

We recover the space-dependent source term from a posteriori interior measurements for the space-time fractional diffusion equations in this paper. The uniqueness of the corresponding inverse problem is established by employing memory effect of fractional derivative, unique continuation property and Duhamel's principle for space-time fractional diffusion equations. We use the Levenberg-Marquardt method to solve the inverse problem. Finally, numerical results verify the correctness of the proposed theory.

\section*{Acknowledgments}

The authors thank the anonymous referees for valuable comments. The second author is supported by National Natural Science Foundation of China (no.\! 12271277) and Ningbo Youth Leading Talent Project (no.\! 2024QL045). The third author is supported by JSPS KAKENHI Grant Numbers JP22K13954, JP23KK0049 and Guangdong Basic and Applied Basic Research Foundation (No.\! 2025A1515012248). This work is partly supported by the Open Research Fund of Key Laboratory of Nonlinear Analysis \& Applications (Central China Normal University), Ministry of Education, China.

\bibliographystyle{unsrt}

\begin{thebibliography}{23}

\bibitem{AF03}
R.A. Adams, Sobolev Spaces, Academic Press, New York, 1975.

\bibitem{2021A fractional}
S. Djennadi, N. Shawagfeh, O.A. Arqub, A fractional Tikhonov regularization method for an inverse backward and source problems in the time-space fractional diffusion equations, {\it Chaos Solitons Fractals}, {\bf150}, 2021, 111127.

\bibitem{2023The}
N.V. Duc, N.V. Thang, N.T. Th\`anh, The quasi-reversibility method for an inverse source problem for time-space fractional parabolic equations, {\it J. Differential Equations}, {\bf344}, 2023, 102--130.

\bibitem{GL15}
R. Gorenflo, Y. Luchko, M. Yamamoto, Time-fractional diffusion equation in the fractional Sobolev spaces, {\it Fract. Calc. Appl. Anal.}, {\bf18}(3), 2015, 799--820.

\bibitem{2020Determination}
J. Janno, Determination of time-dependent sources and parameters of nonlocal diffusion and wave equations from final data, {\it Fract. Calc. Appl. Anal.}, {\bf23}(6), 2020, 1678--1701.	

\bibitem{2023Inverse}
J. Janno, Y. Kian, Inverse source problem with a posteriori boundary measurement for fractional diffusion equations, {\it Math. Methods Appl. Sci.}, {\bf46}(14), 2023, 15868--15882.

\bibitem{2017Harnack's}
J. Jia, J. Peng, J. Yang, Harnack's inequality for a space-time fractional diffusion equation and applications to an inverse source problem, {\it J. Differential Equations}, {\bf262}(8), 2017, 4415--4450.

\bibitem{2022Simultaneous}
Y. Kian, Simultaneous determination of different class of parameters for a diffusion equation from a single measurement, {\it Inverse Problems}, {\bf38}(7), 2022, 075008.

\bibitem{2023Uniqueness}
Y. Kian, Y. Liu, M. Yamamoto, Uniqueness of inverse source problems for general evolution equations, {\it Commun. Contemp. Math.}, {\bf25}(6), 2023, 2250009.

\bibitem{2006Theory}
A.A. Kilbas, H.M. Srivastava, J.J. Trujillo, Theory and Applications of Fractional Differential Equations, Elsevier, Amsterdam, 2006.

\bibitem{2019An}
N. Kinash, J. Janno, An inverse problem for a generalized fractional derivative with an application in reconstruction of time-and space-dependent sources in fractional diffusion and wave equations, {\it Mathematics}, {\bf7}(12), 2019, 1138.

\bibitem{KR20}
A. Kubica, K. Ryszewska, M. Yamamoto, Time-Fractional Differential Equations: A Theoretical Introduction, Springer, Singapore, 2020.

\bibitem{2015Initial-boundary}
Z. Li, Y. Liu, M. Yamamoto, Initial-boundary value problems for multi-term time-fractional diffusion equations with positive constant coefficients, {\it Appl. Math. Comput.}, {\bf257}, 2015, 381--397.

\bibitem{2020Tikhonov}
J. Li, G. Tong, R. Duan, S. Qin, Tikhonov regularization method of an inverse space-dependent source problem for a time-space fractional diffusion equation, {\it J. Appl. Anal. Comput.}, {\bf11}(5), 2021, 2387--2401.

\bibitem{2018An}
Y.S. Li, T. Wei, An inverse time-dependent source problem for a time-space fractional diffusion equation, {\it Appl. Math. Comput.}, {\bf336}, 2018, 257--271.

\bibitem{2016Strong}
Y. Liu, W. Rundell, M. Yamamoto, Strong maximum principle for fractional diffusion equations and an application to an inverse source problem, {\it Fract. Calc. Appl. Anal.}, {\bf19}(4), 2016, 888--906.

\bibitem{2023An inverse}
D. Nie, W. Deng, An inverse random source problem for the time-space fractional diffusion equation driven by fractional Brownian motion, {\it J. Inverse Ill-Posed Probl.}, {\bf31}(5), 2023, 723--738.

\bibitem{1998Fractional}
I. Podlubny, Fractional Differential Equations, Academic Press, San Diego, 1999.

\bibitem{SY11}
K. Sakamoto, M. Yamamoto, Initial value/boundary value problems for fractional diffusion-wave equations and applications to some inverse problems, {\it J. Math. Anal. Appl.}, {\bf382}, 2011, 426--447.

\bibitem{2015Determination}
S. Tatar, R. Tinaztepe, S. Ulusoy, Determination of an unknown source term in a space-time fractional diffusion equation, {\it J. Fract. Calc. Appl.}, {\bf6}(1), 2015, 83--90.

\bibitem{2016Simultaneous}
S. Tatar, R. T\i naztepe, S. Ulusoy, Simultaneous inversion for the exponents of the fractional time and space derivatives in the space-time fractional diffusion equation, {\it Appl. Anal.}, {\bf95}(1), 2016, 1--23.

\bibitem{2015An}
S. Tatar, S. Ulusoy, An inverse source problem for a one-dimensional space-time fractional diffusion equation, {\it Appl. Anal.}, {\bf94}(11), 2015, 2233--2244.

\bibitem{T26}
E.C. Titchmarsh, The zeros of certain integral functions, {\it Proc. London Math. Soc.}, {\bf2}(1), 1926, 283--302

\bibitem{2018Bayesian}
Y. X. Zhang, J. Jia, L. Yan, Bayesian approach to a nonlinear inverse problem for a time-space fractional diffusion equation, {\it Inverse Problems}, {\bf34}(12), 2018, 125002.

\end{thebibliography}

\end{document}